\newcommand{\K}{\mathbb{R}}
\newcommand{\perm}[1]{~\mathrm{per}\left[ #1 \right]}
\newcommand{\one}{\vec{1}}
\newcommand{\trans}{\intercal}
\renewcommand{\hat}{\widehat}
\newcommand{\AND}{\quad\text{and}\quad}
\newcommand{\mydet}[1]{\det \left[ #1 \right]}
\newcommand{\Qmat}[2]{{Q_{#1}^{#2}}}
\newcommand{\vx}{\vec{x}}
\newcommand{\minor}[1]{\left|#1\right|}
\newcommand{\vu}{\vec{u}}
\newcommand{\vv}{\vec{v}}
\newcommand{\vw}{\vec{w}}
\newtheorem{theorem}{Theorem}
\newtheorem{corollary}[theorem]{Corollary}
\newtheorem{lemma}[theorem]{Lemma}
\newtheorem{conjecture}[theorem]{Conjecture}
\begin{document}

\title{A Determinantal Identity for the Permanent of a Rank 2 Matrix}
\markright{Determinantal Identity for Rank 2 Permanents}
\author{Adam W. Marcus \\
\'Ecole Polytechnique F\'ed\'erale de Lausanne 
}
\maketitle

\begin{abstract}
We prove an identity relating the permanent of a rank $2$ matrix and the 
determinants of its Hadamard powers.
When viewed in the right way, the resulting formula looks strikingly 
similar to an identity of Carlitz and Levine, suggesting the 
possibility that these are actually special cases of some more general identity 
(or class of identities) connecting permanents and determinants.
The proof combines some basic facts from the theory of symmetric functions with 
an application of a famous theorem of Binet and Cauchy in linear algebra.


\end{abstract}

\section{Introduction}\label{sec:intro}

The relationship between the {\em determinant} function, which maps a square 
matrix $A$ to
\[
\mydet{A} = \sum_{\sigma \in S_n} (-1)^{|\sigma|} \prod_{i=1}^n A(i, \sigma(i)),
\]
and the {\em permanent} function, which maps a square matrix $B$ to
\[
\perm{B} = \sum_{\sigma \in S_n}\prod_{i=1}^n B(i, \sigma(i)),
\]
is an important open problem in complexity theory.
Despite having similar forms, the computation of the determinant can be done 
efficiently (due to, among other things, its predictable behavior with respect 
to Gaussian 
elimination) while the computation of the permanent is thought to be 
significantly harder.
One of the more successful approaches to relating these two functions, known as 
{\em geometric complexity theory}, is to look for formulas of the type 
\begin{equation}\label{eq:perm-det}
\perm{M} = \mydet{M'},
\end{equation}
where $M$ is a square matrix and $M'$ is a (typically much larger) square 
matrix formed from affine combinations of the entries of $M$.
(The size of the matrix $M'$ that is needed for 
\eqref{eq:perm-det} to hold can then be related to other measures of complexity 
--- we refer the reader to \cite{burg} for more details.) 

For matrices of a specific form, however, it is sometimes possible to find a 
formula that expresses the permanent of a matrix as the determinant of 
other matrices {\em of the same size}. 
One notable example of this is a result of Borchardt \cite{borchardt} that was 
later generalized by Cayley \cite{cayley} and then further generalized by 
Carlitz and Levine \cite{carlitz}.
For a matrix $M \in \K^{n \times n}$ with entries $m(i, j)$ and integer $p$, 
let $M_p$ be the matrix with 
\[
M_p(i, j) = m(i, j)^p.
\]
In this notation, the main result of \cite{carlitz} is the following theorem:
\begin{theorem}[Carlitz--Levine]\label{thm:cl}
For a rank 2 matrix $M$ with no zero entries, 
\begin{equation}
\label{eq:them}
\mydet{M_{-2}} = \mydet{M_{-1}} \perm{M_{-1}}.
\end{equation}
\end{theorem}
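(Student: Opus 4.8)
The plan is to reduce Theorem~\ref{thm:cl} to the classical identity of Borchardt for Cauchy matrices and then prove that identity. For the reduction, write $M$ as a sum of two rank-one matrices; after changing basis in the two-dimensional column space of $M$ and pulling out suitable row and column scalars, one may assume $m(i,j) = u_i v_j\,(1 + x_i y_j)$ with every $u_i$ and $v_j$ nonzero, where $x_i,y_j$ are the resulting parameters and the hypothesis that $M$ has no zero entry forces $1 + x_i y_j \ne 0$ for all $i,j$. Since the scalars $u_i$ and $v_j$ enter $\mydet{M_{-1}}$ and $\perm{M_{-1}}$ each once and $\mydet{M_{-2}}$ twice, they cancel from \eqref{eq:them}, so it suffices to prove \eqref{eq:them} for the matrix with $(i,j)$ entry $1 + x_i y_j$. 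Multiplying \eqref{eq:them} through by $\prod_{i,j}(1+x_iy_j)^2$ makes both sides polynomials in the $x_i$ and $y_j$, so it is enough to treat generic parameters, in which case all $y_j$ are nonzero and distinct and all $x_i$ are distinct. Then $1 + x_iy_j = y_j(x_i - \zeta_j)$ with $\zeta_j := -1/y_j$, and cancelling the column scalars $y_j$ once more reduces \eqref{eq:them} to Borchardt's identity
\begin{equation}\label{eq:borchardt}
\mydet{A^{\circ 2}} = \mydet{A}\perm{A}, \qquad A(i,j) = \frac{1}{x_i - \zeta_j},
\end{equation}
where $A^{\circ 2}$ denotes the Hadamard (entrywise) square of the Cauchy matrix $A$.

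For \eqref{eq:borchardt} the starting point is that column $j$ of $A$ depends only on $\zeta_j$ and that $\partial_{\zeta_j}(x_i - \zeta_j)^{-1} = (x_i - \zeta_j)^{-2}$, so by multilinearity of the determinant in its columns
\[
\mydet{A^{\circ 2}} \;=\; \frac{\partial^n}{\partial\zeta_1\cdots\partial\zeta_n}\,\mydet{A}.
\]
Now I would expand this iterated derivative logarithmically. By Cauchy's determinant formula, $\log\mydet{A} = \sum_{k<l}\log(\zeta_l - \zeta_k) - \sum_{i,k}\log(x_i - \zeta_k) + \text{const}$, where the constant is independent of the $\zeta_j$; a direct computation then gives $\partial_{\zeta_j}\log\mydet{A} = \sum_i (x_i - \zeta_j)^{-1} + \sum_{k\ne j}(\zeta_j - \zeta_k)^{-1} =: g_j$, next $\partial_{\zeta_j}\partial_{\zeta_{j'}}\log\mydet{A} = (\zeta_j - \zeta_{j'})^{-2}$ for $j \ne j'$, and finally that every mixed partial of $\log\mydet{A}$ over three or more distinct indices vanishes. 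Consequently the multivariate Fa\`a di Bruno (exponential) formula for $\partial_{\zeta_1}\cdots\partial_{\zeta_n}\mydet{A}$ collapses onto set partitions of $\{1,\dots,n\}$ with all blocks of size one or two, giving
\[
\frac{\mydet{A^{\circ 2}}}{\mydet{A}} \;=\; \sum_{M}\;\prod_{j\ \text{unmatched by}\ M} g_j \;\prod_{\{j,j'\}\in M}\frac{1}{(\zeta_j - \zeta_{j'})^2},
\]
the sum over all partial matchings $M$ of $\{1,\dots,n\}$. Thus \eqref{eq:borchardt}, and with it Theorem~\ref{thm:cl}, is equivalent to the identity that this partial-matching sum equals $\perm{A} = \sum_{\sigma\in S_n}\prod_i (x_i - \zeta_{\sigma(i)})^{-1}$.

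The hard part is exactly that last equality. I would prove it by induction on $n$: split the matching sum according to whether $n$ is left unmatched (a factor $g_n$ times the sum for $\{1,\dots,n-1\}$) or matched to some $j$ (a factor $(\zeta_j - \zeta_n)^{-2}$ times the sum for $\{1,\dots,n\}\setminus\{j,n\}$), invoke the inductive hypothesis on the two smaller sums, and then repeatedly apply partial-fraction collapses such as $(x - \zeta_a)^{-1} + (\zeta_a - \zeta_b)^{-1} = (x - \zeta_b)/((x - \zeta_a)(\zeta_a - \zeta_b))$ to show that every term carrying a factor $\zeta_j - \zeta_k$ in its denominator cancels, so that precisely the $n!$ terms obtained from the last-column Laplace expansion of $\perm{A}$ survive. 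Controlling this cancellation --- keeping track of which products cancel against which --- is the step I expect to be the most delicate. An alternative that avoids the derivative computation is to prove \eqref{eq:borchardt} directly by induction on $n$, simultaneously running the Desnanot--Jacobi (Dodgson condensation) recursion for $\mydet{A}$ and $\mydet{A^{\circ 2}}$ against a cofactor recursion for $\perm{A}$; there the crux becomes a residue-in-$x_n$ identity among the $(n-1)\times(n-1)$ minors, proved by partial fractions in the same spirit.
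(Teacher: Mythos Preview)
The paper does not actually prove Theorem~\ref{thm:cl}; it is quoted as background and attributed to \cite{carlitz}, with only the one-line remark that their argument is ``elementary, using little more than the definitions and some facts concerning the cycle structure of permutations.'' So there is no proof in the paper to compare against, only that description of the original method.

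Relative to that description, your route is genuinely different. Carlitz and Levine work directly with the double sum
\[
\mydet{M_{-1}}\perm{M_{-1}}=\sum_{\sigma,\tau\in S_n}(-1)^{|\sigma|}\prod_i m(i,\sigma(i))^{-1}m(i,\tau(i))^{-1}
\]
and reorganize it according to the cycle structure of $\sigma^{-1}\tau$, using a partial-fraction identity on each factor. Your plan instead (i) reduces to the Cauchy case via a change of basis in the column space (this reduction is correct, including the cancellation of the row/column scalars and the polynomial-clearing argument for genericity), and then (ii) attacks Borchardt's identity analytically via $\mydet{A^{\circ 2}}=\partial_{\zeta_1}\cdots\partial_{\zeta_n}\mydet{A}$ and a Fa\`a di Bruno expansion of $\log\mydet{A}$. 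Step (ii) up through the partial-matching formula
\[
\frac{\mydet{A^{\circ 2}}}{\mydet{A}}=\sum_{M}\ \prod_{j\notin M} g_j\ \prod_{\{j,j'\}\in M}\frac{1}{(\zeta_j-\zeta_{j'})^{2}}
\]
is correct and is a nice way to see why only size-$1$ and size-$2$ blocks survive.

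The gap is the last step, and it is more than just ``delicate.'' Your proposed induction peels off the index $n$, but the functions $g_j=\sum_i(x_i-\zeta_j)^{-1}+\sum_{k\ne j}(\zeta_j-\zeta_k)^{-1}$ are global: for $j<n$ they still contain the term $(\zeta_j-\zeta_n)^{-1}$, so the ``sum for $\{1,\dots,n-1\}$'' you obtain after removing $n$ is \emph{not} the $(n-1)$-variable instance of the same identity, and the inductive hypothesis does not apply to it as stated. Likewise, the permanent side naturally recurses by deleting a row \emph{and} a column, whereas your matching-sum recursion deletes only $\zeta$-indices; you have not explained what plays the role of deleting an $x_i$. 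To make this work you would have to track the extra $(\zeta_j-\zeta_n)^{-1}$ pieces explicitly and show they combine with the ``$n$ matched to $j$'' branch to rebuild the cofactor expansion of $\perm{A}$ --- this is possible, but it is essentially the same bookkeeping that the cycle-structure proof handles in one stroke, so you may find it cleaner to abandon the Fa\`a di Bruno detour after the reduction to the Cauchy case and finish with the classical $\sigma,\tau$ double-sum argument instead.
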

The proof in \cite{carlitz} is elementary, using little more than the 
definitions and some facts concerning the cycle structure of permutations.
The goal of this article is to prove a formula that has an intriguingly 
similar form to \eqref{eq:them} but for seemingly quite different reasons. 
Our main result, Theorem~\ref{thm:main}, states that for matrices $M$ with rank 
at most $2$,
\begin{equation}\label{eq:us}
(n!)^2 \mydet{M_{n}} = (n^n) \mydet{M_{n-1}}\perm{M_1}.
\end{equation}
The proof will use a combination of tools from the theory of polynomials and a 
theorem of Binet and Cauchy on the minors of a product of matrices.

\subsection{Motivation.}\label{sec:motivation}

While the restriction to matrices of rank at most 2 may seem overly simplistic, 
it should be noted that the permanents of such matrices appear naturally in the 
context of binary operations that are symmetric in both arguments (or BOSBAs).
The author came across them, for example, during an investigation of the 
characteristic polynomials of random matrices.
More specifically, let $A, B \in \mathbb{R}^{n \times n}$ be Hermitian 
matrices, and 
consider 
the (random) polynomial
\[
r(x) = \mydet{x I - A - Q^T B Q},
\]
where $Q$ is an orthogonal matrix drawn uniformly from $\mathcal{O}_n$ (the 
group of $n \times n$ orthogonal matrices) with respect to the Haar measure.

It is not hard to see that the value of $r(x)$ is independent of the 
eigenvectors of $B$ and therefore must be a symmetric function of the 
eigenvalues of $B$.
Conjugating by $Q$, one can see that the same is true for $A$ as well.
That is, when viewed as a function of the eigenvalues of $A$ (as the first 
argument) and the eigenvalues of $B$ (as the second argument), $r(x)$ is a 
BOSBA. 
Furthermore, it can be shown that the expected value of this polynomial is the 
permanent of a rank 2 matrix:
\[
\mathbb{E}_{Q} \left\{  \mydet{x I - A - Q^T B Q} \right\} = \frac{1}{n!}\perm{ 
\{ x - 
\lambda_i(A) - \lambda_j(B) \}_{i, j=1}^n },
\]
where $\lambda_i(A)$ denotes the eigenvalues of $A$ (and similarly for $B$).

Expected characteristic polynomials of this type play an important role in the 
field of {\em finite free probability} which has contributed to a 
number of recent theoretical \cite{gorin, if3} and algorithmic \cite{if4, 
xie_xu} advances.
Since permanents are, by nature, often more difficult to work with than 
determinants (they are not multiplicative, for example), the hope is that a 
formula like 
\eqref{eq:us} could have uses (though, 
admittedly, the author has not found any yet --- see 
Section~\ref{sec:conclusion}).

\section{Preliminaries}

We will use the customary notation that $[n] = \{ 1, 2, \dots, n \}$ and that 
$\binom{[n]}{k}$ denotes the collection of subsets of $[n]$ size $k$.
For a permutation $\sigma$, we write $|\sigma|$ to denote the number of cycles 
in its cycle decomposition.
For a vector $\vx \in \K^n$, a permutation $\sigma \in S_n$, and a set $S 
\subseteq [n]$, we will write
\[
\vx^S := \prod_{i \in S} x_i
\AND
\sigma(S) = \{ \sigma(i) : i \in S \}.
\]
In particular, given a matrix $M$ and sets $I, J$, we will write $M(I, J)$ to 
denote the submatrix of $M$ formed by the rows in $I$ and columns in $J$.

\subsection{Symmetric and alternating polynomials}

A polynomial $p \in \K[x_1, \dots, x_n]$ is said to be {\em 
symmetric} if 
\[
p(x_1, \dots x_i, x_{i+1}, \dots,  x_n) = p(x_1, \dots x_{i+1}, x_{i}, \dots,  
x_n) 
\]
and {\em alternating} if 
\[
p(x_1, \dots x_i, x_{i+1}, \dots,  x_n) = -p(x_1, \dots x_{i+1}, x_{i}, \dots,  
x_n) 
\]
for all transpositions $(i, i+1)$.
Since the set of transpositions generates the symmetric group, equivalent 
definitions are (for symmetric polynomials)
\[
p(x_1, \dots,  x_n) = p(x_{\pi(1)}, \dots,  x_{\pi(n)}) 
\]
and (for alternating polynomials)
\begin{equation}
\label{eq:alternate}
p(x_1, \dots,  x_n) = (-1)^{|\pi|} p(x_{\pi(1)}, \dots,  x_{\pi(n)})
\end{equation}
for all $\pi \in S_n$.
In particular, (\ref{eq:alternate}) implies that any alternating polynomial $p$ 
must be $0$ whenever $x_i = x_j$ for some $i \neq j$.

Examples of symmetric polynomials are the {\em elementary symmetric polynomials}
\[
e_k(x_1, \dots, x_n) = 
\begin{cases}
1 & \text{for $k = 0$,}\\
\sum\limits_{S \in \binom{[n]}{k}} \vx^S & \text{for $1 \leq k \leq n$,}\\
0 & \text{otherwise}
\end{cases}
\]
and the {\em power sum polynomials}
\[
p_k(x_1, \dots, x_n) = \sum_{i=1}^n x_i^k.
\]
One example of an alternating polynomial is the {\em Vandermonde polynomial}
\begin{equation}
\label{eq:vand}
\Delta(x_1, \dots, x_n) = \prod_{i < j} (x_j - x_i). 
\end{equation}
Furthermore, it is easy to see that the Vandermonde polynomial is an essential 
part of any alternating polynomial:
\begin{lemma}
\label{lem:alternate}
For all alternating polynomials $f(x_1, \dots, x_n)$, there exists a symmetric 
polynomial $t(x_1, \dots, x_n)$ such that
\[
f(x_1, \dots, x_n) = \Delta(x_1, \dots, x_n)t(x_1, \dots, x_n).
\]
\end{lemma}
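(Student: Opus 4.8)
The plan is to show that an alternating polynomial $f$ must vanish on every ``diagonal'' hyperplane $x_i = x_j$, deduce that each factor $(x_j - x_i)$ divides $f$, and then argue that the product of all such factors — namely $\Delta$ — divides $f$, leaving a symmetric quotient. First I would fix a pair $i < j$ and view $f$ as a polynomial in $x_j$ with coefficients in $\K[x_1,\dots,\hat{x_j},\dots,x_n]$. By \eqref{eq:alternate} applied to the transposition $(i,j)$, we have $f(\dots,x_i,\dots,x_j,\dots) = -f(\dots,x_j,\dots,x_i,\dots)$, so substituting $x_j = x_i$ gives $f = -f$ and hence $f$ vanishes identically on $x_j = x_i$. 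By the factor theorem (working in the polynomial ring $\K[x_1,\dots,\hat{x_j},\dots,x_n][x_j]$, where $x_j - x_i$ is monic in $x_j$), this means $(x_j - x_i) \mid f$.

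The next step is to upgrade these individual divisibility statements into divisibility by the whole product $\Delta = \prod_{i<j}(x_j - x_i)$. Since $\K[x_1,\dots,x_n]$ is a unique factorization domain and each $(x_j - x_i)$ is irreducible (it is degree one) and these linear forms are pairwise non-associate (no two are scalar multiples of each other), the fact that each divides $f$ implies their product divides $f$. Writing $f = \Delta \cdot t$ for some $t \in \K[x_1,\dots,x_n]$ then completes the existence half; it remains to check $t$ is symmetric. For any transposition $(k,k+1)$ — or more conveniently any transposition $(i,j)$ — both $f$ and $\Delta$ are alternating, so swapping $x_i$ and $x_j$ multiplies each by $-1$; hence $t = f/\Delta$ is fixed by every transposition, and since transpositions generate $S_n$, $t$ is symmetric. (One small point: the quotient $t$ is a priori only a rational function, but since $f$ and $\Delta$ are genuine polynomials and $\Delta \mid f$ in the polynomial ring, $t$ lies in $\K[x_1,\dots,x_n]$, so the symmetry computation is legitimate.)

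I expect the main obstacle — really the only place where one must be slightly careful rather than purely formal — to be the passage from ``$(x_j-x_i) \mid f$ for every pair'' to ``$\Delta \mid f$.'' This is where the UFD structure of the polynomial ring genuinely enters: one needs that the linear forms $x_j - x_i$ are pairwise coprime so that divisibility by each implies divisibility by the product. If one wanted to avoid invoking unique factorization explicitly, an alternative is an inductive argument: having shown $(x_2 - x_1) \mid f$, write $f = (x_2-x_1)g$, observe $g$ is now alternating in the remaining variables (or rather, that $f/(x_2-x_1)$ still vanishes on the other diagonals since $x_2 - x_1$ does not), and peel off the factors one at a time. Either route is routine; I would present the UFD version as it is cleanest. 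The symmetry of $t$ at the end is immediate from the alternating property of both $f$ and $\Delta$ and requires no further work.
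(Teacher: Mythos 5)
Your proposal is correct and follows essentially the same route as the paper: show $f$ vanishes on each hyperplane $x_i = x_j$, deduce each linear factor divides $f$, conclude $\Delta \mid f$, and then cancel $\Delta$ from the alternating relation to see the quotient is symmetric. The only difference is that you make explicit the UFD/pairwise-non-associate justification for passing from divisibility by each $(x_j - x_i)$ to divisibility by their product, a step the paper treats informally; this is a welcome bit of extra care rather than a different argument.
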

\begin{proof}
For distinct $y_2, \dots, y_n \in \K$, (\ref{eq:alternate}) implies that the 
univariate polynomial
\[
g(x) = f(x, y_2, \dots, y_n) \in \K[x]
\]
satisfies $g(y_k) = 0$ for each $k = 2, \dots, n$.
Hence $(x - y_k)$ must be a factor of $g$ and so $(x_1 - x_k)$ must be a factor 
of $f$.
Since this is true for all $k$ and all $i$ (not just $i=1$), every polynomial 
of the form $(x_i - x_k)$ must be a factor of $f$, and so 
\begin{equation}\label{eq:decomposition}
f(x_1, \dots, x_n) = \Delta(x_1, \dots, x_n)t(x_1, \dots, x_n)
\end{equation}
for some polynomial $t$. 
It remains to show that $t$ is symmetric.
To do so, for each $\sigma \in S_n$, we can apply \eqref{eq:decomposition} to 
\eqref{eq:alternate} to get
\[
\Delta(x_1, \dots, x_n)t(x_1, \dots, x_n) = 
(-1)^{|\sigma|}\Delta(x_{\sigma(1)}, \dots, x_{\sigma(n)})t(x_{\sigma(1)}, 
\dots, x_{\sigma(n)}).
\]
Since $\Delta$ is, itself, an alternating polynomial, we have
\[
\Delta(x_1, \dots, x_n) = (-1)^{|\sigma|}\Delta(x_{\sigma(1)}, \dots, 
x_{\sigma(n)})
\]
and so canceling both sides results in
\[
t(x_1, \dots, x_n) = t(x_{\sigma(1)}, \dots, x_{\sigma(n)})
\]
as needed.
\end{proof}

\subsection{Linear algebra}

The only tool we will need from linear algebra is a famous theorem of Binet and 
Cauchy regarding the minors of products of matrices.
The {\em $(I, J)$-minor} of a matrix $M$ is defined as
\[
\minor{M}_{I, J}
= 
\mydet{ M(I, J) }.
\]
The following theorem is attributed to Binet and Cauchy (independently) 
\cite{cb}.

\begin{theorem}[Cauchy--Binet]\label{thm:cb}
Let $m, n, p$ and $k$ be positive integers for which $k \leq \min\{m, n, p\}$.
Then for all $m \times n$ matrices $A$ and $n \times p$ matrices $B$ and all 
sets $I, J$ with $|I| = |J| = k$, we have
\begin{equation}\label{eq:cb}
\minor{ A B }_{I, J} = \sum_{K \in \binom{[n]}{k}}  
\minor{A}_{I, K}
\minor{B}_{K, J}
\end{equation}
\end{theorem}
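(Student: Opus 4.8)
The plan is to derive \eqref{eq:cb} directly from the multilinearity and alternating properties of the determinant. The first step is to reduce to the case $m = p = k$ with $I = J = [k]$: for general $A$, $B$, $I$, $J$ as in the statement, the $(I, J)$-submatrix of the product satisfies $(AB)(I, J) = A(I, [n])\, B([n], J)$, the product of the $k \times n$ matrix $\tilde A := A(I, [n])$ with the $n \times k$ matrix $\tilde B := B([n], J)$, and moreover $A(I, K) = \tilde A(\cdot, K)$ and $B(K, J) = \tilde B(K, \cdot)$ for every $K \in \binom{[n]}{k}$. Since the $(I,J)$-minor is by definition the determinant of the corresponding submatrix, \eqref{eq:cb} is therefore exactly the assertion that
\[
\det(\tilde A \tilde B) = \sum_{K \in \binom{[n]}{k}} \det(\tilde A(\cdot, K))\, \det(\tilde B(K, \cdot)),
\]
and it suffices to prove this for a $k \times n$ matrix $\tilde A$ and an $n \times k$ matrix $\tilde B$. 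I drop the tildes from now on.

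Next, I expand $\det(AB)$ by multilinearity in the columns. Let $a_1, \dots, a_n \in \K^k$ be the columns of $A$ and $b_{lj}$ the entries of $B$; the $j$-th column of $AB$ is $\sum_{l=1}^n b_{lj}\, a_l$, so $k$-linearity of the determinant gives
\[
\det(AB) = \sum_{(l_1, \dots, l_k) \in [n]^k} \Big( \prod_{j=1}^{k} b_{l_j, j} \Big)\, D(l_1, \dots, l_k),
\]
where $D(l_1, \dots, l_k)$ is the determinant of the $k \times k$ matrix whose $j$-th column is $a_{l_j}$. Because the determinant is alternating, $D(l_1, \dots, l_k) = 0$ whenever two of the indices coincide, so only the injective tuples $(l_1, \dots, l_k)$ contribute.

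I then group the surviving tuples by their image $K \in \binom{[n]}{k}$: each equals $(\kappa(\sigma(1)), \dots, \kappa(\sigma(k)))$ for a unique $\sigma \in S_k$, where $\kappa : [k] \to K$ is the increasing bijection. For such a tuple, the matrix with columns $a_{\kappa(\sigma(1))}, \dots, a_{\kappa(\sigma(k))}$ is obtained from $A(\cdot, K)$ by permuting its columns according to $\sigma$, so $D(\kappa(\sigma(1)), \dots, \kappa(\sigma(k))) = \operatorname{sgn}(\sigma)\, \det(A(\cdot, K))$, while $\prod_{j} b_{l_j, j} = \prod_{j} b_{\kappa(\sigma(j)), j}$. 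Summing first over $\sigma$ and then over $K$ yields
\[
\det(AB) = \sum_{K \in \binom{[n]}{k}} \det(A(\cdot, K)) \sum_{\sigma \in S_k} \operatorname{sgn}(\sigma) \prod_{j=1}^{k} b_{\kappa(\sigma(j)), j},
\]
and the inner sum is exactly the determinant of the $k \times k$ matrix with $(i, j)$-entry $b_{\kappa(i), j}$, i.e.\ of $B(K, \cdot)$. This is \eqref{eq:cb}.

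The step I expect to need the most care is the sign bookkeeping just above: checking that permuting the columns of $A(\cdot, K)$ by $\sigma$ contributes precisely the factor $\operatorname{sgn}(\sigma)$, and that $\sum_{\sigma} \operatorname{sgn}(\sigma) \prod_j b_{\kappa(\sigma(j)), j}$ is $\det(B(K, \cdot))$ and not its negative --- it is, since $\operatorname{sgn}(\sigma) = \operatorname{sgn}(\sigma^{-1})$ identifies this with the column-expansion form of the determinant. The hypothesis $k \le \min\{m, n, p\}$ is used only so that the minors in \eqref{eq:cb} are defined and $\binom{[n]}{k} \neq \emptyset$ (for $k > n$ both sides vanish). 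As an alternative, one could compute the determinant of a suitable $(n + k) \times (n + k)$ block matrix built from $A$, $B$ and an identity block in two ways --- by column operations that expose $AB$ in a corner, and by a generalized Laplace expansion --- but that route only trades the sign check above for a comparable one.
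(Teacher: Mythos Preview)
Your argument is correct: the reduction to $m=p=k$ via $(AB)(I,J)=A(I,[n])\,B([n],J)$ is sound, the multilinear expansion and the elimination of non-injective tuples are standard, and the sign bookkeeping is handled carefully and accurately.

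However, there is nothing to compare your proof against: the paper does not prove Theorem~\ref{thm:cb}. It is stated as a classical result (``attributed to Binet and Cauchy (independently)'') with a citation to \cite{cb}, and is then used as a black box in the proof of Corollary~\ref{cor:fn}. Your write-up would stand as a self-contained supplement, but it is not filling a gap that the paper claims to fill.
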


For those unfamiliar with Theorem~\ref{thm:cb}, we hope to convey some 
appreciation for it by pointing out that \eqref{eq:cb} simultaneously 
generalizes two fundamental formulas from linear algebra that (a priori) have 
no 
obvious relation to each other: the formula for matrix multiplication (the case 
when $k = 1$), and the product formula for determinants (the case when 
$m=n=p=k$).

\section{The main theorem}

Our approach will be to first prove (\ref{eq:us}) in a special case 
(Corollary~\ref{cor:main}) and to then extend that result to the full theorem.
We start by finding an expansion for the permanent of certain matrices in terms 
of the elementary symmetric polynomials:
\begin{lemma}\label{lem:perm}
For vectors $\vu, \vv \in \K^n$, if $A$ is the $n \times n$ matrix with 
\[
A(i, j) = 1 + u_i v_j,
\]
then 
\[
\perm{A} 
= \sum_{k=0}^n k! (n-k)! e_k(\vu)e_k(\vv).
\]
\end{lemma}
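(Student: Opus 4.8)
The plan is to expand the permanent directly from its definition and then collect terms by tracking which rows ``use'' the $u_i v_j$ factor. Writing
\[
\perm{A} = \sum_{\sigma \in S_n} \prod_{i=1}^n \bigl(1 + u_i v_{\sigma(i)}\bigr),
\]
I would expand each product over subsets $S \subseteq [n]$, the set $S$ recording those indices $i$ for which we select the term $u_i v_{\sigma(i)}$ rather than the $1$. This gives
\[
\perm{A} = \sum_{\sigma \in S_n} \sum_{S \subseteq [n]} \prod_{i \in S} u_i v_{\sigma(i)}
= \sum_{S \subseteq [n]} \vu^S \sum_{\sigma \in S_n} \vv^{\sigma(S)}.
\]
So the problem reduces to evaluating the inner sum $\sum_{\sigma \in S_n} \vv^{\sigma(S)}$ for a fixed set $S$ of size $k$.

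The key combinatorial observation is that for a fixed $S$ with $|S| = k$, as $\sigma$ ranges over $S_n$, the image $\sigma(S)$ ranges over all $k$-subsets $T \in \binom{[n]}{k}$, and each such $T$ is attained by exactly $k!(n-k)!$ permutations (choose the bijection $S \to T$ in $k!$ ways and the bijection $[n]\setminus S \to [n]\setminus T$ in $(n-k)!$ ways). Hence
\[
\sum_{\sigma \in S_n} \vv^{\sigma(S)} = k!\,(n-k)! \sum_{T \in \binom{[n]}{k}} \vv^T = k!\,(n-k)!\, e_k(\vv),
\]
using the definition of the elementary symmetric polynomial. Substituting this back and grouping the subsets $S$ by their common size $k$, the sum $\sum_{|S| = k} \vu^S$ is exactly $e_k(\vu)$, so
\[
\perm{A} = \sum_{k=0}^{n} k!\,(n-k)!\, e_k(\vu)\, e_k(\vv),
\]
as claimed.

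I do not anticipate a genuine obstacle here; the only point requiring a little care is the counting claim that each target set $T$ is hit by exactly $k!(n-k)!$ permutations, which should be stated cleanly rather than waved at. It is also worth noting explicitly that the $k=0$ term contributes $n!$ (corresponding to $S = \emptyset$, $e_0 = 1$), which matches $\perm{J}$ for the all-ones matrix $J$ and serves as a sanity check. An alternative, slightly slicker packaging is to write the single line $\perm{A} = \sum_{S} \vu^S \sum_{\sigma} \vv^{\sigma(S)}$ and then immediately invoke the orbit-counting remark, but the subset-expansion derivation above is the most transparent and is what I would present.
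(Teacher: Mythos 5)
Your proposal is correct and follows essentially the same route as the paper's proof: expand each product over subsets $S \subseteq [n]$, fix $|S| = k$, and count that each target set $T$ is the image $\sigma(S)$ for exactly $k!(n-k)!$ permutations. Your version is slightly more explicit about that counting step (factoring the permutation into a bijection $S \to T$ and one on the complements), which is a welcome clarification rather than a deviation.
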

\begin{proof}
By definition, we have
\[
\perm{A} 
= \sum_{\sigma \in S_n} \prod_{i=1}^n (1 + u_iv_{\sigma(i)}) 
\]
where for each $\sigma$, we have
\[
\prod_{i=1}^n (1 + u_iv_{\sigma(i)}) 
= \sum_{S \subseteq [n]} \vu^S \vv^{\,\sigma(S)}.
\]
For fixed $S$ with $|S| = k$, as $\sigma$ ranges over all permutations, 
$\sigma(S)$ will range over all sets $T \in \binom{[n]}{k}$ and any $\sigma'$ 
for which $\sigma'(S) = \sigma(S)$ will give the same term.
As there are a total of $k!(n-k)!$ such permutations, we have
\[
\perm{A} 
= \sum_{k=0}^n k! (n-k)! \sum_{S \in \binom{[n]}{k}}\sum_{T \in \binom{[n]}{k}} 
\vu^S \vv^{T}
= \sum_{k=0}^n k! (n-k)! e_k(\vu)e_k(\vv)
\]
as claimed.
\end{proof}

For a vector $\vx \in \K^n$, let $\{ Q_{\vx}^k \}_{k=0}^n$ be the 
collection of $n \times n$ matrices with entries
\begin{equation}\label{eq:jac}
\Qmat{\vx}{k}(i, j) = 
\begin{cases}
x_j^{i} & \text{if $i > k$,} \\
x_j^{i-1} & \text{otherwise}.
\end{cases}
\end{equation}
For example, when $\vx = (a, b, c)$, we have
\[
\Qmat{\vx}{0} =
\begin{bmatrix} 
a & b & c \\
a^2 & b^2 & c^2 \\
a^3 & b^3 & c^3
\end{bmatrix}
,~
\Qmat{\vx}{1} =
\begin{bmatrix} 
1 & 1 & 1 \\
a^2 & b^2 & c^2 \\
a^3 & b^3 & c^3
\end{bmatrix}
,~\dots~,~ 
\Qmat{\vx}{3} =
\begin{bmatrix} 
1 & 1 & 1 \\
a & b & c \\
a^2 & b^2 & c^2
\end{bmatrix}.
\]

\begin{lemma}\label{lem:Q}
For any vector $\vx \in \K^n$, we have 
\[
\mydet{ \Qmat{\vx}{k} } 
= 
e_{n-k}(\vx) \Delta(\vx).
\]
\end{lemma}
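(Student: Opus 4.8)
The plan is to recognize $\det\Qmat{\vx}{k}$ as an alternating polynomial in $x_1,\dots,x_n$ and then pin down the symmetric cofactor supplied by Lemma~\ref{lem:alternate}. Swapping two columns $x_a$ and $x_b$ swaps the corresponding columns of $\Qmat{\vx}{k}$, which negates the determinant; hence $\det\Qmat{\vx}{k}$ is alternating, and by Lemma~\ref{lem:alternate} we may write $\det\Qmat{\vx}{k} = \Delta(\vx)\, t_k(\vx)$ for some symmetric polynomial $t_k$. It therefore remains to identify $t_k$ with $e_{n-k}(\vx)$.

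For the identification, first I would compute the total degree. The rows of $\Qmat{\vx}{k}$ carry degrees $0,1,\dots,k-1$ (the ``otherwise'' rows, using exponent $i-1$) and $k+1,\dots,n$ (the rows with $i>k$, using exponent $i$), so every term of the determinant has degree $\bigl(\sum_{i=0}^{n}i\bigr) - k = \binom{n+1}{2} - k$. Since $\Delta(\vx)$ is homogeneous of degree $\binom{n}{2}$, the cofactor $t_k$ is homogeneous of degree $\binom{n+1}{2} - k - \binom{n}{2} = n-k$. Being symmetric and homogeneous of degree $n-k$, $t_k$ is an unknown linear combination of monomial symmetric functions of that degree; to show it equals $e_{n-k}$ on the nose I would evaluate both sides cleverly rather than expand blindly.

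The cleanest route is a column/row argument that reduces $\Qmat{\vx}{k}$ to a genuine Vandermonde matrix. Factor $x_j$ out of each column in the rows indexed $1,\dots,k$: this pulls out $\prod_j x_j = e_n(\vx)$ and leaves a matrix whose rows have exponents $0,\dots,k-1$ and $k+1,\dots,n$ — i.e. a ``Vandermonde with one row removed'' (the row of exponent $k$ is missing). Such a generalized Vandermonde determinant is classically the Schur polynomial $s_\lambda(\vx)$ for the single-column-type partition obtained from the missing exponent; concretely it equals $\Delta(\vx)$ times the complete homogeneous symmetric function $h_{?}$ or, after the identity $s_{(1^{n-k})} = e_{n-k}$, exactly $\Delta(\vx)\, e_{n-k}(\vx)/e_n(\vx)$ — wait, rather I would avoid invoking Schur functions at all and instead argue directly: expand the missing-row Vandermonde by the Laplace/Jacobi–Trudi-free identity that $\det$ of rows with exponents $\{0,\dots,n\}\setminus\{k\}$ equals $\Delta(\vx)\cdot e_{n-k}(\vx)$, which one proves by noting the left side is alternating of the right degree and checking the coefficient of a single monomial such as $x_1^0 x_2^1\cdots \widehat{x_{k+1}^{\,k}}\cdots x_{n+1}^{\,n}$ (reindexing) against the known expansion of $\Delta(\vx)$. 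Multiplying back the factor $e_n(\vx)$ from the first $k$ rows then cancels, and — more carefully — one finds the extracted $e_n(\vx)$ is exactly absorbed, leaving $\det\Qmat{\vx}{k} = \Delta(\vx)\, e_{n-k}(\vx)$.

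I expect the main obstacle to be the bookkeeping in that last step: getting the signs and the index shift right when one row of the $(n{+}1)$-exponent Vandermonde is deleted, and confirming that the $e_n(\vx)$ pulled from the first $k$ rows is precisely what is needed to convert the reduced determinant into $e_{n-k}$. A safe fallback avoiding generalized Vandermonde lore entirely is induction on $n$: expand $\det\Qmat{\vx}{k}$ along the last column, use the recurrences $e_{n-k}(x_1,\dots,x_n) = e_{n-k}(x_1,\dots,x_{n-1}) + x_n\,e_{n-k-1}(x_1,\dots,x_{n-1})$ and $\Delta(x_1,\dots,x_n) = \Delta(x_1,\dots,x_{n-1})\prod_{i<n}(x_n - x_i)$, and match terms; the base case $n=1$ is immediate. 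Either way the symmetric-polynomial framework of the preceding subsection does all the conceptual work, and only a degree count plus one explicit coefficient is genuinely required.
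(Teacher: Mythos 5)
Your setup matches the paper's: $\mydet{\Qmat{\vx}{k}}$ is alternating, so by Lemma~\ref{lem:alternate} it equals $\Delta(\vx)\,t_k(\vx)$ with $t_k$ symmetric, and the total-degree count correctly forces $t_k$ to be homogeneous of degree $n-k$. But from that point on there is a genuine gap. A symmetric homogeneous polynomial of degree $n-k$ in $n$ variables is a linear combination of as many monomial symmetric functions as there are partitions of $n-k$, so your proposed finish --- ``check the coefficient of a single monomial'' --- pins down only one of those coefficients and cannot by itself identify $t_k$ with $e_{n-k}$. The paper closes exactly this gap with one more observation you omit: each $x_j$ occurs in only one column of $\Qmat{\vx}{k}$, so its degree in $\mydet{\Qmat{\vx}{k}}$ is at most $n$, while its degree in $\Delta(\vx)$ is exactly $n-1$; hence $t_k$ has degree at most $1$ in every variable, and a multilinear symmetric homogeneous polynomial of degree $n-k$ is forced to be a scalar multiple of $e_{n-k}$. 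Only then does a single coefficient check (the diagonal monomial) finish the proof.

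Your intermediate manipulations also do not survive scrutiny. You cannot ``factor $x_j$ out of each column in the rows indexed $1,\dots,k$'': for $k\ge 1$ the first row of $\Qmat{\vx}{k}$ consists of $1$'s, and in any case extracting a factor from a column requires it to divide every entry of that column, not just the entries in some rows; the claimed appearance and subsequent ``absorption'' of $e_n(\vx)$ is not a real computation (and you abandon it mid-paragraph). The induction fallback is also not workable as stated: expanding $\mydet{\Qmat{\vx}{k}}$ along the last column produces minors whose row exponents are $\{0,\dots,n\}\setminus\{k,r\}$ for various $r$, and only the $r=n$ term is again of the form $\Qmat{\cdot}{k}$ in $n-1$ variables, so the inductive hypothesis does not cover the remaining cofactors without strengthening the statement to general bialternants. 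The fix is small --- add the per-variable degree bound --- but as written the identification of $t_k$ with $e_{n-k}$ is not established.
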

\begin{proof}
It should be clear from the properties of determinants that each function 
\[
f_k(\vx) := \mydet{ \Qmat{\vx}{k} }
\]
is an alternating polynomial.
Hence, by Lemma~\ref{lem:alternate}, we have that 
\[
f_k(\vx) = t_k(\vx) \Delta(\vx)
\]
for some symmetric function $t_k$, so it remains to show that $t_k = e_k$.

We start by considering degrees.
Note that each $f_k(\vx)$ is a homogeneous polynomial of degree 
$\binom{n+1}{2} - k$ and that $\Delta(\vx)$ is a homogeneous 
polynomial of degree $\binom{n}{2}$.
Hence $t_k(\vx)$ must be a homogeneous polynomial of degree $n-k$.
Now note that the degree of each $x_i$ in $f_k(\vx)$ is at 
most $n$, whereas the the degree of each $x_i$ in $\Delta(\vx)$ is 
$n-1$.
Hence the degree of each $x_i$ in $t_k(\vx)$ is at most $1$.

The only symmetric polynomials that satisfy these constraints are multiples of 
the elementary symmetric polynomials, and so we must have
\[
f_k(\vx) = w_k e_{n-k}(\vx) \Delta(\vx)
\]
for some constant $w_k$.
However, one can check that we must have $w_k = 1$ by looking at the monomial 
formed by the product of the diagonal entries in $\Qmat{\vx}{k}$ and 
seeing that it is positive.
\end{proof}

We will now use Lemma~\ref{lem:Q} to prove two corollaries.
The first corollary appears in the solution to Problem 293(a) in \cite{mir} 
(and we suspect 
the authors were aware of Lemma~\ref{lem:Q} though they did not explicitly 
state it).

\begin{corollary}\label{cor:fn-1}
For vectors $\vu, \vv \in \K^n$, if $B$ is the $n \times n$ 
matrix with entries
\[
B(i,j)  = (1 + u_i v_j)^{n-1},
\]
then 
\[
\mydet{B} = \Delta(\vu)\Delta(\vv) \left(\prod_{j=0}^{n-1} \binom{n-1}{j} 
\right).
\]
\end{corollary}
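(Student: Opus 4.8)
The plan is to expand each entry of $B$ by the binomial theorem and then recognize the result as a product of three $n \times n$ matrices whose determinants are already within reach. Since $B(i,j) = (1 + u_i v_j)^{n-1} = \sum_{m=0}^{n-1} \binom{n-1}{m} u_i^m v_j^m$, one can write $B = U D V^\trans$, where $U$ is the $n \times n$ matrix with $U(i,m) = u_i^{m-1}$, where $V$ is the $n \times n$ matrix with $V(j,m) = v_j^{m-1}$, and where $D$ is the diagonal matrix whose $(m,m)$ entry is $\binom{n-1}{m-1}$. (This can be seen directly by multiplying out the matrix product, or recognized as the instance $m = n = p = k$ of Theorem~\ref{thm:cb}, which is exactly the classical product formula for determinants.)

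With that factorization in hand, the rest is bookkeeping with determinants. Since all three factors are square, $\mydet{B} = \mydet{U}\,\mydet{D}\,\mydet{V^\trans} = \mydet{U}\,\mydet{D}\,\mydet{V}$. The middle factor is immediate: $\mydet{D} = \prod_{j=0}^{n-1}\binom{n-1}{j}$. For the outer factors, the key observation is that $U^\trans$ is exactly the matrix $\Qmat{\vu}{n}$ from \eqref{eq:jac}, since the condition $i > k$ is vacuous when $k = n$; hence Lemma~\ref{lem:Q} gives $\mydet{U} = \mydet{\Qmat{\vu}{n}} = e_0(\vu)\Delta(\vu) = \Delta(\vu)$, and likewise $\mydet{V} = \Delta(\vv)$. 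Multiplying the three pieces yields $\mydet{B} = \Delta(\vu)\Delta(\vv)\prod_{j=0}^{n-1}\binom{n-1}{j}$, as claimed.

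I do not expect a real obstacle here; the only point requiring a moment of care is the sign and normalization of the Vandermonde determinant, i.e. making sure $\mydet{U}$ comes out as $+\Delta(\vu)$ and not $-\Delta(\vu)$. This is handled for free by quoting Lemma~\ref{lem:Q} (whose statement already pins down the leading constant to be $1$), provided we are consistent about indexing the columns of $U$, equivalently the rows of $\Qmat{\vu}{n}$, by the exponents $0,1,\dots,n-1$ in that order, matching the ordering convention built into $\Qmat{\vx}{k}$ and into $\Delta$. As an alternative not relying on Lemma~\ref{lem:Q}, one could instead expand $\mydet{B}$ by multilinearity in the rows, note that a nonzero contribution forces the chosen exponents $m_1,\dots,m_n$ to be a permutation of $\{0,1,\dots,n-1\}$, and reassemble the resulting alternating sum into $\Delta(\vu)\Delta(\vv)\prod_{j=0}^{n-1}\binom{n-1}{j}$; but the factorization route is cleaner and stays in the spirit of the rest of the section.
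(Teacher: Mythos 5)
Your proof is correct and is essentially the paper's own argument: the paper likewise writes $B = \Qmat{\vu}{n}^{\trans} Y \Qmat{\vv}{n}$ with $Y$ the diagonal matrix of binomial coefficients (your $U D V^\trans$ with $U = \Qmat{\vu}{n}^\trans$ and $V^\trans = \Qmat{\vv}{n}$) and then applies the determinant product rule together with Lemma~\ref{lem:Q} at $k=n$. Nothing to add.
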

\begin{proof}
Let $Y$ be the $n \times n$ diagonal matrix with entries $Y(i, i) = 
\binom{n-1}{i-1}$.
It is easy to check that 
\[
B = \Qmat{\vu}{n}^{\trans} Y \Qmat{\vv}{n}
\]
and so by the product rule for the determinant, we have
\[
\mydet{B} 
= \mydet{\Qmat{\vu}{n}}\mydet{Y}\mydet{ \Qmat{\vv}{n}}.
\]
Lemma~\ref{lem:Q} then completes the proof.
\end{proof}

The second corollary is similar in spirit to the first one, but requires the 
added machinery of Theorem~\ref{thm:cb}.

\begin{corollary}\label{cor:fn}
For vectors $\vu, \vv \in \K^n$, if $C$ is the $n \times n$ matrix with 
entries
\[
C(i,j)  = (1 + u_i v_j)^{n},
\]
then 
\[
\mydet{C} = \Delta(\vu)\Delta(\vv) \left(\prod_{j=0}^{n} \binom{n}{j} 
\right) \sum_{k=0}^n \frac{e_{k}(\vu) e_{k}(\vv)}{\binom{n}{k}} .
\]
\end{corollary}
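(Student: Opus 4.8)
The plan is to realize $C$ as a product of three explicit matrices and then apply the Cauchy--Binet theorem, in the same spirit as the proof of Corollary~\ref{cor:fn-1} but now calling on the full strength of Theorem~\ref{thm:cb}. Expanding the binomial gives
\[
C(i,j) = (1 + u_i v_j)^{n} = \sum_{k=0}^{n} \binom{n}{k}\, u_i^{k} v_j^{k},
\]
so if we let $P$ and $R$ be the $(n+1) \times n$ matrices whose rows are indexed by $k = 0, 1, \dots, n$ with $P(k, i) = u_i^{k}$ and $R(k, j) = v_j^{k}$, and let $D$ be the $(n+1) \times (n+1)$ diagonal matrix with $D(k, k) = \binom{n}{k}$, then $C = P^{\trans} D R$. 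The reason we cannot simply invoke the product rule for determinants, as was done in Corollary~\ref{cor:fn-1}, is that the $n$-th power produces $n+1$ distinct monomials rather than $n$; the factorization above contracts over an $(n+1)$-dimensional space, and a genuine Cauchy--Binet sum is needed.

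First I would apply Theorem~\ref{thm:cb} to the product $C = \bigl( P^{\trans} D \bigr)\, R$ of the $n \times (n+1)$ matrix $P^{\trans} D$ with the $(n+1) \times n$ matrix $R$, taking the full-size minors $I = J = [n]$. The size-$n$ subsets of the $(n+1)$-element middle index set $\{0, 1, \dots, n\}$ are exactly the complements of singletons, so writing $K_r := \{0, 1, \dots, n\} \setminus \{r\}$ for $r = 0, \dots, n$ we obtain
\[
\mydet{C} = \sum_{r=0}^{n} \minor{P^{\trans} D}_{[n],\, K_r}\ \minor{R}_{K_r,\, [n]}.
\]

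Next I would evaluate the two families of minors by matching them with the matrices $\Qmat{\vx}{r}$ of \eqref{eq:jac}. Deleting from $R$ the row of power $r$ and keeping the remaining rows in increasing order of power produces precisely the matrix $\Qmat{\vv}{r}$, so $\minor{R}_{K_r, [n]} = \mydet{\Qmat{\vv}{r}}$. For the other factor, the column of $P^{\trans} D$ of power $l$ equals $\binom{n}{l}$ times the column $(u_1^{l}, \dots, u_n^{l})^{\trans}$; deleting the column of power $r$ and pulling the scalar out of each of the remaining columns leaves the transpose of $\Qmat{\vu}{r}$, whence
\[
\minor{P^{\trans} D}_{[n], K_r} = \left( \prod_{l \neq r} \binom{n}{l} \right) \mydet{\Qmat{\vu}{r}}.
\]
Applying Lemma~\ref{lem:Q} to both determinants and using $\prod_{l \neq r} \binom{n}{l} = \binom{n}{r}^{-1} \prod_{l=0}^{n} \binom{n}{l}$ rewrites the $r$-th summand as
\[
\frac{1}{\binom{n}{r}} \left( \prod_{l=0}^{n} \binom{n}{l} \right) e_{n-r}(\vu)\, e_{n-r}(\vv)\, \Delta(\vu)\, \Delta(\vv).
\]

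Finally, summing over $r$, pulling out the common factor $\Delta(\vu) \Delta(\vv) \prod_{l=0}^{n} \binom{n}{l}$, and re-indexing the remaining sum by $r \mapsto n - r$ (legitimate because $\binom{n}{r} = \binom{n}{n-r}$) yields the claimed formula. I expect the only genuinely delicate point to be the bookkeeping in the previous step: one must list the rows of $P$ and $R$ in the order $0, 1, \dots, n$ so that the submatrices obtained by deleting a single interior row coincide literally --- with no spurious permutation sign --- with the matrices $\Qmat{\vx}{r}$ as defined in \eqref{eq:jac}. Once that correspondence is set up, everything else is a routine manipulation of binomial coefficients.
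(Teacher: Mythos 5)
Your proof is correct and follows essentially the same route as the paper: factor $C$ as a Vandermonde-type matrix times a diagonal matrix of binomial coefficients times another Vandermonde-type matrix, expand the full-size minor via Cauchy--Binet over the $(n+1)$-element middle index set, and identify each term with $\mydet{\Qmat{\vu}{r}}\mydet{\Qmat{\vv}{r}}$ via Lemma~\ref{lem:Q}. The only (cosmetic) difference is that you absorb the diagonal factor into one side and apply Theorem~\ref{thm:cb} once, whereas the paper keeps it separate, applies Cauchy--Binet twice, and then discards the vanishing off-diagonal minors of the diagonal matrix.
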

\begin{proof}
Let $\hat{Q}_{\vx}$ be the $n \times (n+1)$ matrix with entries
\[
\hat{Q}_{\vx}(i, j) = x_i^{j-1}
\] 
and let $Z$ be the $(n+1) \times (n+1)$ diagonal matrix with $Z(i, i) = 
\binom{n}{i-1}$.
Then we have $C = \hat{Q}_{\vu} Z \hat{Q}_{\vv}^\trans$
and so we can use Cauchy--Binet (twice) to expand $\mydet{C}$:
\begin{align}
\mydet{C} 
&= \minor{\hat{Q}_{\vu} Z \hat{Q}_{\vv}^\trans}_{[n], [n]}
= \sum_{K \in \binom{[n+1]}{n}} 
\minor{\hat{Q}_{\vu}}_{[n], K} \minor{Z \hat{Q}_{\vv}^\trans}_{K, [n]}
\notag \\&= \sum_{K, L \in \binom{[n+1]}{n}} 
\minor{\hat{Q}_{\vu}}_{[n], K} \minor{Z}_{K, L} 
\minor{\hat{Q}_{\vv}^\trans}_{L, [n]}. \label{eq:expand}
\end{align}
To simplify \eqref{eq:expand}, we note that since $Z$ is diagonal, 
$\minor{Z}_{K, L} = 0$ unless 
$K = L$.
Furthermore, note that the complement of each set $K$ contains 
a single element.
When that element is $p \in [n+1]$, we have
\[
\minor{\hat{Q}_{\vx}}_{[n], K} = \minor{\Qmat{\vx}{p}}
\AND 
\minor{Z}_{K, K} = \frac{\mydet{Z}}{\binom{n}{p-1}}.
\]
The result then follows from Lemma~\ref{lem:Q}.

\end{proof}

Putting the three corollaries together gives us the following:

\begin{corollary}\label{cor:main}
For vectors $\vu, \vv \in \K^n$, if $A, B, C$ are $n \times n$ matrices 
with entries
\[
A(i, j) = 1 + u_i v_j,
\quad
B(i, j) = (1 + u_i v_j)^{n-1},
\AND
C(i, j) = (1 + u_i v_j)^{n},
\]
then
\[
(n!)^2 \mydet{C} = (n^n) \mydet{B} \perm{A}.
\]
\end{corollary}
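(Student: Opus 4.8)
The plan is to assemble the three preceding results and reduce the claimed identity to a single elementary fact about binomial coefficients. All of the analytic content has already been extracted: Lemma~\ref{lem:perm} gives $\perm{A} = \sum_{k=0}^n k!(n-k)!\, e_k(\vu)e_k(\vv)$, Corollary~\ref{cor:fn-1} gives $\mydet{B} = \Delta(\vu)\Delta(\vv)\prod_{j=0}^{n-1}\binom{n-1}{j}$, and Corollary~\ref{cor:fn} gives $\mydet{C} = \Delta(\vu)\Delta(\vv)\bigl(\prod_{j=0}^{n}\binom{n}{j}\bigr)\sum_{k=0}^n e_k(\vu)e_k(\vv)/\binom{n}{k}$. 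Substituting these into $(n!)^2\mydet{C} = n^n \mydet{B}\perm{A}$, both sides become $\Delta(\vu)\Delta(\vv)$ times an explicit linear combination of the products $e_k(\vu)e_k(\vv)$, $0 \le k \le n$.

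Since the asserted equality is an identity of polynomials in $\vu$ and $\vv$, and since the products $e_k(\vu)e_k(\vv)$ lie in distinct bigraded pieces (bidegree $(k,k)$) and are therefore linearly independent, it suffices to compare the coefficient of $e_k(\vu)e_k(\vv)$ on the two sides. On the left this coefficient is $(n!)^2\bigl(\prod_{j=0}^{n}\binom{n}{j}\bigr)\binom{n}{k}^{-1}$, and on the right it is $n^n\bigl(\prod_{j=0}^{n-1}\binom{n-1}{j}\bigr)k!(n-k)!$. Using $\binom{n}{k}^{-1} = k!(n-k)!/n!$, the factors of $k!(n-k)!$ cancel and the required identity collapses to the statement $n!\prod_{j=0}^{n}\binom{n}{j} = n^n\prod_{j=0}^{n-1}\binom{n-1}{j}$, which no longer involves $k$.

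To close, I would verify this last identity from the relation $\binom{n}{j} = \tfrac{n}{n-j}\binom{n-1}{j}$ valid for $0\le j\le n-1$, together with $\binom{n}{n}=1$: taking the product over $j=0,\dots,n$ gives $\prod_{j=0}^{n}\binom{n}{j} = \bigl(\prod_{j=0}^{n-1}\tfrac{n}{n-j}\bigr)\prod_{j=0}^{n-1}\binom{n-1}{j} = \tfrac{n^n}{n!}\prod_{j=0}^{n-1}\binom{n-1}{j}$, and multiplying through by $n!$ yields the claim. There is no real obstacle here; the only thing requiring care is the bookkeeping of constants — in particular making sure the $\binom{n}{k}^{-1}$ coming from Corollary~\ref{cor:fn} exactly absorbs the $k!(n-k)!$ coming from Lemma~\ref{lem:perm}, so that the leftover scalar is genuinely independent of $k$ — but once that cancellation is observed the proof is immediate.
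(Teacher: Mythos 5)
Your proposal is correct and follows essentially the same route as the paper: combine Lemma~\ref{lem:perm} with Corollaries~\ref{cor:fn-1} and~\ref{cor:fn}, observe that $\binom{n}{k}^{-1}=k!(n-k)!/n!$ makes the $k$-dependence cancel, and reduce everything to the identity $n!\prod_{j=0}^{n}\binom{n}{j}=n^n\prod_{j=0}^{n-1}\binom{n-1}{j}$. The only cosmetic difference is that you match coefficients of $e_k(\vu)e_k(\vv)$ termwise while the paper first rewrites the sum in Corollary~\ref{cor:fn} as $\perm{A}/n!$, and you use $\binom{n}{j}=\tfrac{n}{n-j}\binom{n-1}{j}$ where the paper uses the equivalent $\binom{n}{j}=\tfrac{n}{j}\binom{n-1}{j-1}$.
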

\begin{proof}
Combining Lemma~\ref{lem:perm} and 
Corollary~\ref{cor:fn} gives 
\[
n! \mydet{C} = \Delta(\vu)\Delta(\vv) \left(\prod_{j=0}^{n} \binom{n}{j} 
\right) \perm{A},
\]
where we have
\[
\prod_{j=0}^n \binom{n}{j}
= \prod_{j=1}^n \frac{n}{j}\binom{n-1}{j-1}
= \frac{n^n}{n!}\prod_{j=1}^n \binom{n-1}{j-1}
= \frac{n^n}{n!} \prod_{j=0}^{n-1} \binom{n-1}{j}.
\]
Plugging in Corollary~\ref{cor:fn-1} gives the result.
\end{proof}

Finally, we extend Corollary~\ref{cor:main} to the case of any rank $2$ matrix.
\begin{theorem}\label{thm:main}
Let $X \in \K^{n \times n}$ be any rank $2$ matrix and let $X_{n-1}, X_n \in \K^{n \times n}$ be the matrices with 
\[
X_{n-1}(i, j) = X(i, j)^{n-1}
\AND
X_{n}(i, j) = X(i, j)^n.
\]
Then
\[
(n!)^2 \mydet{X_n} = (n^n) \mydet{X_{n-1}} \perm{X}.
\]
\end{theorem}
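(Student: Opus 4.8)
The plan is to reduce the general rank-$2$ case to Corollary~\ref{cor:main} by finding the right parametrization of a rank-$2$ matrix. The key observation is that the formula in Corollary~\ref{cor:main} is homogeneous in a way that lets us absorb the extra parameters of a general rank-$2$ matrix into the vectors $\vu$ and $\vv$. Write a rank-$2$ matrix $X$ as $X(i,j) = a_i c_j + b_i d_j$ for vectors $\vec{a}, \vec{b}, \vec{c}, \vec{d} \in \K^n$. I would first handle the generic case where $a_i \neq 0$ and $c_j \neq 0$ for all $i, j$, writing $X(i,j) = a_i c_j (1 + u_i v_j)$ with $u_i = b_i/a_i$ and $v_j = d_j/c_j$. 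Then $X(i,j)^p = a_i^p c_j^p (1 + u_i v_j)^p$, so if $D_a$ and $D_c$ denote the diagonal matrices with entries $a_i$ and $c_j$, we have $X_p = D_a^p \, M^{(p)} D_c^p$ where $M^{(p)}(i,j) = (1+u_iv_j)^p$. Taking determinants, the factors $\mydet{D_a}^p \mydet{D_c}^p = (\vec{a}^{[n]} \vec{c}^{[n]})^p$ pull out cleanly, and similarly $\perm{X} = (\vec{a}^{[n]} \vec{c}^{[n]}) \perm{M^{(1)}}$ since the permanent also scales multiplicatively under diagonal conjugation.

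With this substitution, the claimed identity $(n!)^2 \mydet{X_n} = n^n \mydet{X_{n-1}} \perm{X}$ becomes, after dividing through by the common factor $(\vec{a}^{[n]} \vec{c}^{[n]})^n$ on the left and $(\vec{a}^{[n]} \vec{c}^{[n]})^{n-1} \cdot (\vec{a}^{[n]} \vec{c}^{[n]})$ on the right, exactly the statement of Corollary~\ref{cor:main} applied to $\vu$ and $\vv$. (One should check the exponents match: the left side carries $(\vec{a}^{[n]}\vec{c}^{[n]})^n$ and the right carries $(\vec{a}^{[n]}\vec{c}^{[n]})^{n-1+1} = (\vec{a}^{[n]}\vec{c}^{[n]})^n$, so they do.) This disposes of the generic case.

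The remaining obstacle — and I expect this to be the only real subtlety — is removing the genericity assumptions: a general rank-$2$ matrix need not admit a decomposition with all $a_i, c_j$ nonzero, and even when it does, the quantities $u_i = b_i/a_i$ need not be distinct, nor need they be real in the form required. The cleanest fix is a density/continuity argument. Both sides of the desired identity are polynomial functions of the $4n$ parameters $a_i, b_i, c_j, d_j$ (the entries of $X_n$, $X_{n-1}$ are polynomials in these, and $\mydet{\cdot}$, $\perm{\cdot}$ are polynomials in the entries). The set of parameter tuples for which the generic argument applies — those with all $a_i \neq 0$, all $c_j \neq 0$ — is Zariski-dense, and moreover every rank-(at most)-$2$ matrix $X$ arises as $a_i c_j + b_i d_j$ for \emph{some} choice of parameters, including ones in this dense set (e.g. by perturbing a given decomposition). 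Since two polynomials agreeing on a Zariski-dense set are equal, the identity holds for all rank-$2$ matrices; and since it trivially holds for rank $\le 1$ matrices (both sides vanish for $n \ge 2$, as $X_{n-1}$ and $X_n$ then have rank $1$), it holds for all $X$ of rank at most $2$.

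I would therefore structure the proof as: (i) reduce via diagonal scaling to the form $M^{(p)}(i,j) = (1+u_iv_j)^p$ in the generic case and invoke Corollary~\ref{cor:main}; (ii) observe both sides are polynomial in a rank-$2$ decomposition's parameters; (iii) conclude by density. The main thing to be careful about is step (ii)–(iii): one must argue that the polynomial identity in the parameters is what transfers, rather than trying to directly perturb $X$ itself while maintaining rank exactly $2$, which is why phrasing everything in terms of the parameters $\vec{a}, \vec{b}, \vec{c}, \vec{d}$ is the right move.
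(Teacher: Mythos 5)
Your proof is correct, but the step that extends Corollary~\ref{cor:main} to a general rank-$2$ matrix is genuinely different from the paper's. The paper writes $Y = \vu\otimes\vv + \vw\otimes\vx$ and observes that both $\mydet{Y_n}$ and $\mydet{Y_{n-1}}\perm{Y}$, expanded in monomials, are homogeneous of degree exactly $n$ in each pair $(u_k,w_k)$ and each pair $(v_k,x_k)$; hence every monomial has the form $\prod_k u_k^{i_k}w_k^{n-i_k}v_k^{j_k}x_k^{n-j_k}$, the specialization $\vw=\vx=\one$ sends distinct monomials to distinct monomials, and the coefficient identity furnished by Corollary~\ref{cor:main} lifts verbatim to general $\vw,\vx$. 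You instead factor out the second rank-one summand by diagonal scaling, $X_p = D_a^p\,M^{(p)}D_c^p$, use multiplicativity of $\mydet{\cdot}$ and of $\perm{\cdot}$ under left and right diagonal scaling (scaling, not conjugation, but the claim is right) to reduce to Corollary~\ref{cor:main} when all $a_i,c_j\neq 0$, and then remove the genericity hypothesis by noting that both sides are polynomials in the decomposition parameters $(\vec{a},\vec{b},\vec{c},\vec{d})\in\K^{4n}$ that agree on a dense subset. Both arguments are sound; your worries about the $u_i$ needing to be distinct or real are unfounded, since Corollary~\ref{cor:main} is an unrestricted identity in $\vu,\vv\in\K^n$ and a real rank-$\le 2$ matrix always admits a real decomposition, but the density step handles this anyway. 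The paper's route stays entirely within formal coefficient comparison and never divides by anything, so it needs no limiting or density argument; yours follows the standard ``prove it generically, then extend by polynomial identity'' template and makes the scaling invariance of the formula explicit. Your exponent bookkeeping checks out: both sides carry $\bigl(\vec{a}^{[n]}\vec{c}^{[n]}\bigr)^n$.
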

\begin{proof}
First note that if we let $\one \in \K^n$ denote the vector with $\one(k) = 1$ 
for all $k$, then Corollary~\ref{cor:main} proves the theorem in the case that
\[
X = \vu \otimes \vv + \one \otimes \one.
\]
Now let $Y =  \vu \otimes \vv + \vw \otimes \vx$ for general $\vw, \vx$.
The expansion of $\mydet{Y_n}$ in terms of monomials has the form
\begin{equation}
\label{eq:Y1}
\mydet{Y_n} 
= \sum_{i_1, \dots, i_n, j_1, \dots, j_n} c_{i_1, \dots, i_n, j_1, \dots, j_n} 
\prod_k u_k^{i_k}w_k^{n - i_k}v_k^{j_k}x_k^{n - j_k}
\end{equation}
where the $c_{i_1, \dots, i_n, j_1, \dots, j_n}$ are constants.
Similarly, $\mydet{Y_{n-1}} \perm{Y}$ has an expansion
\begin{equation}
\label{eq:Y2}
\mydet{Y_{n-1}} \perm{Y} 
= \sum_{i_1, \dots, i_n, j_1, \dots, j_n} \hat{c}_{i_1, \dots, i_n, j_1, \dots, 
j_n} 
\prod_k u_k^{i_k}w_k^{n - i_k}v_k^{j_k}x_k^{n - j_k}
\end{equation}
for some constants $\hat{c}_{i_1, \dots, i_n, j_1, \dots, 
j_n}$. 
Plugging in $\vx = \one$ and $\vw = \one$, however, does not cause any of the 
coefficients to combine.
That is,
\[
\mydet{X_n} 
= \sum_{i_1, \dots, i_n, j_1, \dots, j_n} c_{i_1, \dots, i_n, j_1, \dots, j_n} 
\prod_k u_k^{i_k}v_k^{j_k}
\]
and
\[
\mydet{X_{n-1}} \perm{X} 
= \sum_{i_1, \dots, i_n, j_1, \dots, j_n} \hat{c}_{i_1, \dots, i_n, j_1, \dots, 
j_n} 
\prod_k u_k^{i_k}v_k^{j_k},
\]
and so by Corollary~\ref{cor:main}, we have
\[
(n!)^2 c_{i_1, \dots, i_n, j_1, \dots, j_n} = (n^n) \hat{c}_{i_1, \dots, i_n, 
j_1, 
\dots, j_n}
\]
for all indices $i_1, \dots, i_n$ and $j_1, \dots, j_n$.
Plugging this into (\ref{eq:Y1}) and (\ref{eq:Y2}) implies equality for $Y$.
\end{proof}

\section{Conclusion}\label{sec:conclusion}

It is worth noting that Theorem~\ref{thm:main} does not seem to be useful 
algorithmically.
For the purpose of computing permanents, it tends to be slower than the 
algorithm of Barvinok \cite{barvinok} and also runs into stability issues 
whenever the two determinants approach $0$ (in particular, when the original 
matrix $X$ is actually rank 1).

Returning to the motivation discussed in Section~\ref{sec:intro}, the
author discovered Theorem~\ref{thm:main} in a (failed) attempt to prove the 
following conjecture, which would have useful implications in the 
field of finite free probability:
\begin{conjecture}
For any matrix $T$ with rank at most $2$,  
\[
\perm{
\begin{matrix}
T & T \\
T & T
\end{matrix}
}
\leq 
\binom{2n}{n}
\perm{T}^2.
\]
\end{conjecture}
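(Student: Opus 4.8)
The plan is to follow the strategy already set up for Theorem~\ref{thm:main}: prove the statement first for matrices of the special form $T = \vu \otimes \vv + \one \otimes \one$, and then argue that the general rank $\le 2$ case follows by a polynomial-identity (coefficient-matching) argument of the kind used in the proof of Theorem~\ref{thm:main}. However, the inequality is \emph{not} a polynomial identity, so the extension step is more delicate than in Theorem~\ref{thm:main}; I will have to argue it differently (see the last paragraph).

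\medskip

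\noindent\textbf{Step 1: the block permanent as a rank-2 permanent of double the size.}
First I would observe that the $2n \times 2n$ block matrix $\left[\begin{smallmatrix} T & T \\ T & T\end{smallmatrix}\right]$ has the same rank as $T$ (at most $2$), since its rows come in equal pairs. Writing $T = \vu \otimes \vv + \vw \otimes \vx$ with $\vu, \vv, \vw, \vx \in \K^n$, the block matrix equals $\vu' \otimes \vv' + \vw' \otimes \vx'$ where each primed vector is the relevant unprimed vector stacked on top of a copy of itself. So Lemma~\ref{lem:perm} (in the affine-combination normalization, i.e.\ after pulling out a scalar and assuming a $1 + (\cdot)(\cdot)$ form) gives a clean formula for $\perm{\left[\begin{smallmatrix} T & T \\ T & T\end{smallmatrix}\right]}$ in terms of $e_k$ of the doubled vectors. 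The key point is that $e_k$ of a doubled vector $(\vu, \vu)$ is a weighted sum $\sum_{a+b=k}\binom{\text{stuff}}{}\, e_a(\vu)e_b(\vu)$ — or, more simply, in the case $T = \vu\otimes\vv + \one\otimes\one$ one gets $A'(i,j) = 1 + u'_i v'_j$ with $\vu' = (\vu,\vu)$, $\vv' = (\vv,\vv)$, and Lemma~\ref{lem:perm} applies directly at size $2n$.

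\medskip

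\noindent\textbf{Step 2: reduce the special case to a scalar inequality.}
In the special case $T(i,j) = 1 + u_i v_j$, Lemma~\ref{lem:perm} gives $\perm{T} = \sum_{k=0}^n k!(n-k)!\, e_k(\vu)e_k(\vv)$ and $\perm{\left[\begin{smallmatrix} T & T \\ T & T\end{smallmatrix}\right]} = \sum_{k=0}^{2n} k!(2n-k)!\, e_k(\vu')e_k(\vv')$ where $\vu' = (\vu,\vu)$. Expanding $e_k(\vu') = \sum_{a+b=k} e_a(\vu)e_b(\vu)$ and similarly for $\vv'$, the desired inequality becomes a statement comparing a bilinear form in the vector $\big(e_a(\vu)e_b(\vu)\big)_{a,b}$ against $\binom{2n}{n}\big(\sum_k k!(n-k)! e_k(\vu)e_k(\vv)\big)^2$. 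I would try to prove this by showing the corresponding inequality \emph{coefficient by coefficient} in the monomials $e_a(\vu)e_b(\vu)e_c(\vv)e_d(\vv)$ — i.e.\ that the coefficient of each such monomial on the left is at most $\binom{2n}{n}$ times the coefficient on the right. Because the $e_k$ can be taken to be algebraically independent indeterminates (for $n$ distinct real entries they range over an open set), such a coefficientwise bound would suffice. Concretely, for the monomial with $a+b = c+d = k$ on the diagonal, the left side contributes $(a+c)!\,(2n-a-c)! + \ldots$ terms (summing over the ways to write $k = a+b = c+d$ as sums matching up), and the right side contributes $\binom{2n}{n}$ times $2\cdot a!(n-a)!\,c!(n-c)!$ or $(a!(n-a)!)^2$; the claim reduces to a finite family of binomial-coefficient inequalities like $\binom{a+c}{a}\binom{2n-a-c}{n-a} \le \binom{2n}{n}$, which is the standard log-concavity / Vandermonde-type bound $\sum_{j}\binom{m}{j}\binom{m'}{k-j} = \binom{m+m'}{k}$ applied with $m = n$, $m'=n$.

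\medskip

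\noindent\textbf{Step 3: the general rank-2 case — the main obstacle.}
This is where the argument cannot simply mimic Theorem~\ref{thm:main}: an inequality between two polynomials in $\vu,\vv,\vw,\vx$ that holds on the subvariety $\{\vw = \vx = \one\}$ need not hold everywhere. I see two possible routes. (a) \emph{Reduce to the special form by scaling and a limiting argument}: if $T = \vu\otimes\vv + \vw\otimes\vx$ has no zero rows/columns one can try to absorb $\vw,\vx$ into a diagonal conjugation $D_1 T D_2$, which multiplies both sides of the conjectured inequality by the same positive factor $(\det D_1 \det D_2)^{\ldots}$ on the permanent side and $\left(\prod\right)^2$ on the right, after checking that $\perm{D_1 T D_2} = \left(\prod d_i\right)\left(\prod d'_j\right)\perm{T}$ and the analogous identity for the doubled matrix — thereby normalizing to $\vw = \vx = \one$; the degenerate cases (zero entries, rank $\le 1$) then follow by continuity. (b) Alternatively, prove the coefficientwise inequality directly for the four-vector expansions \eqref{eq:Y1}-style, i.e.\ show that for \emph{every} monomial $\prod_k u_k^{i_k} w_k^{n-i_k} v_k^{j_k} x_k^{n-j_k}$ the coefficient in $\perm{\left[\begin{smallmatrix} T & T \\ T & T\end{smallmatrix}\right]}$ is at most $\binom{2n}{n}$ times the coefficient in $\perm{T}^2$ — but here one must be careful because $\perm{T}^2$ is a product of two copies of the same polynomial and cross terms can have either sign, so this route requires the entries to be such that the relevant sub-sums are nonnegative, which is exactly the obstruction. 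I expect route (a), via diagonal conjugation plus continuity, to be the cleanest, and the genuinely hard part to be verifying that the scalar/binomial inequalities in Step 2 actually go the right way for \emph{all} index patterns — in particular handling the off-diagonal monomials (where $a+b \ne c+d$), for which the coefficient on the right-hand side $\perm{T}^2$ vanishes, forcing one to show the corresponding coefficient on the left also vanishes; that vanishing should follow from the fact that $\perm{\left[\begin{smallmatrix} T & T \\ T & T\end{smallmatrix}\right]}$ in the $1+u_iv_j$ normalization only involves $e_k(\vu')e_k(\vv')$ with \emph{equal} indices, but pinning down the bookkeeping is the crux.
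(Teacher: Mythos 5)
First, a point of calibration: this statement is presented in the paper as a \emph{conjecture}, not a theorem. The author states explicitly that Theorem~\ref{thm:main} arose from a \emph{failed} attempt to prove it, and that a proof is known only when all entries of $T$ are positive. So there is no proof in the paper to compare yours against, and a complete argument here would be a new result. Your proposal, unfortunately, breaks down exactly where the paper says the difficulty lies.

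The gap is in Step 2. You propose to establish the inequality by comparing, monomial by monomial in the quantities $e_a(\vu)e_b(\vu)e_c(\vv)e_d(\vv)$, the expansion of $\perm{A'}$ (with $A'$ the doubled matrix) against that of $\binom{2n}{n}\perm{A}^2$. Two things go wrong. (i) A coefficientwise bound between two polynomials in the $e_k$'s does not imply an inequality between their values: algebraic independence is the right tool for proving \emph{identities} by matching coefficients (as in the proof of Theorem~\ref{thm:main}), but to deduce an \emph{inequality} from termwise domination you would additionally need the monomials to be evaluated at nonnegative arguments, and $e_k(\vu)$, $e_k(\vv)$ take both signs for real $\vu, \vv$. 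This sign issue is precisely why the positive-entries case is tractable and the general case is open. (ii) Even the coefficientwise comparison is false. Writing $e_m(\vu') = \sum_{a+b=m} e_a(\vu)e_b(\vu)$, the left-hand side $\sum_m m!(2n-m)!\, e_m(\vu')e_m(\vv')$ contains the monomial $e_0(\vu)e_2(\vu)\, e_1(\vv)^2 = e_2(\vu)e_1(\vv)^2$ (take $m=2$, $a=0$, $b=2$, $c=d=1$) with strictly positive coefficient, whereas every monomial of $\perm{A}^2 = \sum_{k,l} k!(n-k)!\, l!(n-l)!\, e_k(\vu)e_l(\vu)e_k(\vv)e_l(\vv)$ carries the \emph{same} index multiset on $\vu$ as on $\vv$; the coefficient of $e_2(\vu)e_1(\vv)^2$ on the right is therefore $0$. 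Your Step 3 anticipates trouble only from monomials with $a+b \neq c+d$, but those vanish on both sides; the genuinely problematic monomials are those with $a+b = c+d$ but $\{a,b\} \neq \{c,d\}$, and they do not vanish on the left. Finally, the diagonal-scaling reduction in Step 3(a) is sound as far as it goes, but it only normalizes to $T = \one \otimes \one + \vu \otimes \vv$ with \emph{arbitrary real} $\vu, \vv$, which is exactly the regime where the sign obstruction lives, so it does not advance the argument.
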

\noindent A proof is known in the case that the entries of $T$ are all 
positive, but the general case seems harder.

\subsection{Further research}

There are two obvious directions for extending Theorem~\ref{thm:main}, 
both of which would be quite 
interesting.  
First, the structural similarity between \eqref{eq:them} and \eqref{eq:us} 
suggests that a larger class of formulas of the type 
\[
c(n) \det[M_i] = \mydet{M_j} \perm{M_k}
\]
could exist for matrices $M \in \mathbb{R}^{n\times n}$ of a certain type (in 
our case, rank 2).
Second, one can ask if there is a determinantal formula similar to 
\eqref{eq:us} that is capable of computing the permanent of a rank $3$ matrix.
In this regard, it is worth mentioning that the author's original proof of 
Theorem~\ref{thm:main} used a formula of Jacobi that relates the determinants 
of the $\Qmat{\vx}{k}$ matrices defined in \eqref{eq:jac} to {\em Schur 
polynomials} \cite{jacobi}.
The author opted for the current presentation, which was suggested by an 
anonymous referee, due to its elegance.  
For the purposes of extension, however, we mention the connection to Schur 
polynomials as a possible approach.

A final (but far more speculative) research direction lies in the fundamental 
relationship between the permanent and determinant functions.
In particular, one can ask whether different models for geometric complexity 
theory could lead to new and interesting results. 
Equations \eqref{eq:them} and \eqref{eq:us} suggest two possible 
generalizations.
First, one could attempt to satisfy \eqref{eq:perm-det} with a matrix $M'$ 
whose entries are {\em polynomials} in the 
entries of $M$ (instead of just affine combinations).
Second, one could try to replace \eqref{eq:perm-det} with an equation of the 
form 
\[
\mydet{M''}\perm{M} = \mydet{M'},
\]
where both $M''$ and $M'$ are affine (or polynomial) combinations of the 
entries of $M$.
It is unclear what the immediate implications of either generalization would 
be, but given the central importance of complexity theory in computer science, 
one would expect that any new results of this type would be quite interesting.



\end{document}